\def\R{\mathbb{R}}
\def\C{\mathbb{C}}
\DeclareMathOperator{\Vol}{Volume}
\DeclareMathOperator{\rank}{rk}
\DeclareMathOperator\sign{sign}
\theoremstyle{plain}
\newtheorem{theorem}{Theorem}
\theoremstyle{definition}
\newtheorem*{definition*}{Definition}
\theoremstyle{remark}
\newtheorem{task}[theorem]{Task}
\newtheorem{question}[theorem]{Question}
\newaliascnt{lemma}{theorem}
\newaliascnt{proposition}{theorem}
\newtheorem{proposition}[proposition]{Proposition}
\newaliascnt{corollary}{theorem}
\newtheorem{corollary}[corollary]{Corollary}
\newaliascnt{observation}{theorem}
\numberwithin{equation}{section}
\begin{document}

\title{Reflections on trisection genus}
\author{Michelle Chu and Stephan Tillmann}

\begin{abstract} 
The Heegaard genus of a 3--manifold, as well as the growth of Heegaard genus in its finite sheeted cover spaces, has extensively been studied in terms of algebraic, geometric and topological properties of the 3--manifold. This note shows that analogous results concerning the trisection genus of a smooth, orientable 4--manifold have more general answers than their counterparts for 3--manifolds. In the case of hyperbolic 4--manifolds, upper and lower bounds are given in terms of volume and a trisection of the Davis manifold is described.
\end{abstract}

\primaryclass{57Q15, 57N13, 51M10, 57M50}
\keywords{trisection, trisection genus, stable trisection genus, triangulation complexity, rank of group, Davis manifold}
\makeshorttitle


\subsection*{Bounds on trisection genus}

Gay and Kirby's construction of a \emph{trisection} for arbitrary smooth, oriented closed 4--manifolds~\cite{GK} defines a decomposition of the 4--manifold into three $4$-dimensional 1--handlebodies glued along their boundaries in the following way: Each handlebody is a boundary connected sum of copies of $S^1 \times B^3,$ and has boundary a connected sum of copies of $S^1 \times S^2$ (here, $B^i$ denotes the $i$-dimensional ball and $S^j$ denotes the $j$-dimensional sphere). The triple intersection of the $4$-dimensional 1--handlebodies is a closed orientable surface $\Sigma$, called the {\em central surface}, which divides each of their boundaries into two $3$--dimensional 1--handlebodies (and hence is a Heegaard surface). These $3$--dimensional 1--handlebodies are precisely the intersections of pairs of the 4--dimensional 1--handlebodies.

A trisection naturally gives rise to a quadruple of non-negative integers $(g; g_0, g_1, g_2)$, encoding the genus $g$ of the central surface $\Sigma$ and the genera $g_0,$ $g_1,$ and  $g_2$ of the three 4--dimensional 1--handlebodies. We allow the genera to be distinct following~\cite{Meier-classification-2016, Rubinstein-multisections-2016}.
The \emph{trisection genus} of $M,$ denoted $g(M),$ is the minimal genus of a central surface in any trisection of $M.$ A trisection with $g(\Sigma) = g(M)$ is called a \emph{minimal genus trisection}. 

If $M$ has a $(g; g_0, g_1, g_2)$--trisection, then $\chi(M) = 2 + g - g_0 - g_1 - g_2.$ Since the fundamental group of each 4--dimensional 1--handlebody surjects onto the fundamental group of $M,$ we have $g_k \ge \beta_1(M).$ Combining the above equation with $\chi(M) = 2 - 2 \beta_1(M) + \beta_2(M)$ therefore gives
\[ g(M) \ge \beta_1(M) + \beta_2(M).\]
This is an equality for the six manifolds with trisection genus at most two~\cite{MZ-standard}, and for all standard simply connected 4--manifolds~\cite{Spreer-trisection-2018}. Castro and Ozbagci~\cite{Castro-trisections-2017} constructed trisections of genus $2g+2$ of the twisted bundles $S_g \widetilde{\times} S^2,$ where $S_g$ is a closed, connected, oriented surface of genus $g.$ By the above inequality, these are all minimal and so $g(S_g \widetilde{\times} S^2) = 2g+2.$
In the case of $S_g \times S^2,$ Castro and Ozbagci~\cite{Castro-trisections-2017} constructed trisections of genus $2g+5.$ Hence the lower bound of $2g+2$ is at most off by 3. 

We have $g_k \ge \rank \pi_1(M) \ge \beta_1(M),$ where $\rank(\pi_1(M))$ is the minimal number of generators of the fundamental group of $M$. Hence lower bounds on the rank give lower bounds on trisection genus, and, conversely, upper bounds on the trisection genus give upper bounds on the rank. Moreover,  $g(M) \ge \chi(M) - 2 + 3 \rank(\pi_1(M)).$

\begin{question}
Given any finitely presented group $G,$ is there a  smooth, oriented closed 4--manifold $M$ with $\pi_1(M) = G$ such that $g(M) = \chi(M) - 2 + 3 \rank(G)$?
\end{question}

The question has a positive answer for surface groups (using the above examples of $S_g \widetilde{\times} S^2$) and the infinite cyclic group (by virtue of $S^1\times S^3$). It thus has a positive answer for free products of these groups (in particular, finitely generated free groups), by taking connected sums of these manifolds and application of Grushko's theorem~\cite{Grushko-bases-1940, Neumann-number-1943}.

We now turn to an upper bound.
Let $\sigma(M)$ be the minimal number of 4--simplices in a (possibly singular) triangulation of $M.$ We call $\sigma(M)$  the \emph{triangulation complexity} of $M.$ Then the upper bound on trisection genus recently given in \cite{Bell-computing-2017} using triangulations gives:
\[ 60\ \sigma(M) \ge g(M).\]

This note gives some simple consequences of these upper and lower bounds. Our starting point is the following, which at once gives upper and lower bounds on trisection genus, \emph{and} a lower bound on the triangulation complexity of a 4--manifold.

\begin{proposition}\label{pro:main bound}
Let $M$ be a smooth orientable 4--manifold and assume that $M$ is not diffeomorphic to $S^4.$ Then
\[60 \sigma(M) \ge g(M) \ge \frac{1}{3} | \chi(M)|.\]
\end{proposition}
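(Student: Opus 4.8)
The plan is to take the left inequality $60\sigma(M) \ge g(M)$ verbatim from \cite{Bell-computing-2017}, so that the entire argument concentrates on the lower bound $g(M) \ge \tfrac{1}{3}|\chi(M)|$. For this I would fix a minimal genus trisection, whose central surface then has genus $g = g(M)$ and whose three $4$--dimensional $1$--handlebodies $H_0, H_1, H_2$ have genera $g_0, g_1, g_2$, and record the two structural facts already visible from the set-up. The first is the Euler characteristic identity $\chi(M) = 2 + g - g_0 - g_1 - g_2$. The second, which is the key topological input, is the inequality $g \ge g_k$ for each $k$: since $\Sigma$ is a Heegaard surface for $\partial H_k = \#^{g_k}(S^1 \times S^2)$ and the Heegaard genus of $\#^{g_k}(S^1 \times S^2)$ equals $g_k$, the genus of the Heegaard surface $\Sigma$ must be at least $g_k$.

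With these in hand I would split the absolute value into its two inequalities. For $3g(M) \ge -\chi(M)$, I substitute the identity and use $g_0 + g_1 + g_2 \le 3g$, which follows from $g \ge g_k$, to obtain
\[ 3g + \chi = 4g + 2 - (g_0 + g_1 + g_2) \ge g + 2 > 0, \]
and this direction requires no hypothesis on $M$. For $3g(M) \ge \chi(M)$, I instead discard the non-negative genera by way of $g_k \ge 0$ in the identity, giving $\chi \le g + 2$, and then invoke $g \ge 1$ to conclude $\chi \le g + 2 \le 3g$. Combining the two estimates yields $3g(M) \ge |\chi(M)|$, and dividing by $3$ finishes the lower bound.

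The only place where the exclusion of $S^4$ enters, and the single step I expect to need genuine care rather than bookkeeping, is the claim $g(M) \ge 1$. Here I would argue that a genus-$0$ trisection forces $\Sigma = S^2$, hence $g_k \le g = 0$ for every $k$, so each handlebody is a $4$--ball and $M$ is diffeomorphic to $S^4$; contrapositively, $M \ne S^4$ gives $g(M) \ge 1$. Everything else is elementary manipulation of the identity together with $g \ge g_k$, so the substance of the proposition is really the recognition that the central surface is a Heegaard surface for each $\partial H_k$, combined with this boundary-case analysis.
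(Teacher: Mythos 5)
Your proof is correct, but it follows a genuinely different route from the paper's. The paper argues via homology and a case division on $\chi(M)$: for $\chi \ge 3$ it uses $g \ge \chi - 2$; for small positive $\chi$ with $\beta_1(M) > 0$ it uses $g_k \ge \beta_1(M)$ (each handlebody group surjects onto $\pi_1(M)$); for the leftover positive case ($\beta_1 = 0$, forcing $\chi = 2$) it invokes the Meier--Zupan classification of genus-two trisections \cite{MZ-standard} to get $g \ge 3$; and for $\chi \le 0$ it combines $g \ge \beta_1$ with $\chi = 2 - 2\beta_1 + \beta_2$. You instead hinge everything on the structural inequality $g \ge g_k$, which the paper never uses but which is legitimate: the paper's setup already records that the central surface is a Heegaard surface of $\partial H_k \cong \#^{g_k}(S^1 \times S^2)$, and the Heegaard genus of that manifold equals $g_k$ since rank bounds Heegaard genus from below. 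From there both halves of $3g \ge |\chi|$ are bookkeeping, and in fact your negative-direction estimate $2g + \chi \ge 2$ recovers exactly the paper's bound $g \ge 1 + \frac{1}{2}|\chi|$ for $\chi \le 0$, just derived from $g \ge g_k$ rather than from $g \ge \beta_1$. The trade-off lies in the classification input: the paper needs the full genus-two classification, whereas you need only the genus-zero statement that a $(0;0,0,0)$-trisected manifold is diffeomorphic to $S^4$. Note that this statement, while standard, is not free in the smooth category: passing from ``all three pieces are $4$-balls'' to ``$M$ is diffeomorphic to $S^4$'' amounts to gluing two $4$-balls along a $3$-ball in their boundaries (giving a $4$-ball) and then capping off along $S^3$, which requires Cerf's theorem that there are no exotic twisted $4$-spheres; alternatively, cite it as the genus-zero case of the classification in \cite{MZ-standard}. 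With that reference supplied your argument is complete and somewhat more self-contained than the paper's; what it does not yield is the paper's concluding refinement that $1/3$ can be improved to $1/2$ once $\C P^2$ is also excluded.
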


The proof is simple. We have $\chi(M) = 2 + g - g_0-g_1 - g_2.$ Hence $g(M)\ge \chi(M)-2.$ In particular, the lower bound holds when $\chi(M)\ge 3.$ If $\chi(M)>0$ and $\beta_1(M)>0,$ then $g(M)\ge \chi(M).$ Now suppose $1\le \chi(M) \le 2,$ $\beta_1(M)=0$ and $M \neq S^4.$ 
The classification of trisections of genus $2$ due to Meier and Zupan~\cite{MZ-standard}, then implies $g(M)\ge 3 \ge \chi(M).$  So the lower bound holds in the case of positive Euler characteristic. Moreover, if $\C P^2$ is excluded from the statement, one may replace $1/3$ by $1/2.$

Combining $\chi(M) = 2 - 2 \beta_1(M) + \beta_2(M)$ with the bound $g(M) \ge \beta_1(M)$ gives $g(M)\geq 1-\frac{1}{2}\chi(M)$.
So if $\chi(M)\le 0,$ then $g(M)\ge \frac{1}{2}|\chi(M)| + 1.$ This is sharp for $S^1\times S^3$, where $\chi(S^1\times S^3)=0$ and $g(S^1\times S^3)=1.$ \qed


\subsection*{Stable trisection genus}

We now turn to finite sheeted covers. In particular, we are interested in the case where there are finite covers of arbitrarily large degree, but our results also apply when there are only finitely many such covers. If $M$ is hyperbolic, then residual finiteness implies that there are finite covers of arbitrarily large degree. If $\chi(M) < 0,$ then $\beta_1(M)>0,$ and hence there are finite cyclic covers of any degree. Since the only manifold covered by $S^4$ is the non-orientable manifold $\R P^4$~\cite[Section 12.1]{hillman-four-2002}, we have:

\begin{corollary}\label{cor:cover bounds}
Let $M$ be a smooth orientable 4--manifold and assume that $M$ is not diffeomorphic to $S^4.$
Suppose $N \to M$ is a finite sheeted cover of degree $d.$ Then $g(N)$ is $O(d).$ Specifically,
\[60 \sigma(M) \ d \ge g(N) \ge \frac{1}{3} | \chi(M)|\ d.\]
Moreover, if $\chi(M)\neq 0,$ then $g(N)$ is $\Theta(d).$
\end{corollary}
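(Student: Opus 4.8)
The plan is to derive Corollary~\ref{cor:cover bounds} directly from Proposition~\ref{pro:main bound} applied to the cover $N$, using the multiplicativity of both the triangulation complexity and the Euler characteristic under finite covers. The essential observation is that a $d$--sheeted cover $N \to M$ pulls back any triangulation of $M$ to a triangulation of $N$ with exactly $d$ times as many $4$--simplices, and that Euler characteristic is multiplicative, so $\chi(N) = d\,\chi(M)$.

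First I would verify that $N$ satisfies the hypotheses of Proposition~\ref{pro:main bound}, namely that $N$ is a smooth orientable $4$--manifold not diffeomorphic to $S^4$. Orientability and smoothness are inherited by covers, and the cited fact that the only manifold covered by $S^4$ is $\mathbb{R}P^4$ (which is non-orientable) rules out $N \cong S^4$ for an orientable cover. Next I would establish the two multiplicativity facts. For the triangulation complexity, pulling back a minimal triangulation of $M$ gives $\sigma(N) \le d\,\sigma(M)$; one only needs this inequality in the correct direction, so the subtlety of whether pulled-back triangulations are minimal does not arise. For the Euler characteristic, $\chi(N) = d\,\chi(M)$ is standard.

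Then I would simply feed these into Proposition~\ref{pro:main bound}:
\[
60\,\sigma(M)\,d \ge 60\,\sigma(N) \ge g(N) \ge \tfrac{1}{3}|\chi(N)| = \tfrac{1}{3}|\chi(M)|\,d,
\]
which gives both the claimed upper bound (showing $g(N)$ is $O(d)$) and the claimed lower bound. The final assertion that $g(N)$ is $\Theta(d)$ when $\chi(M) \neq 0$ follows because the lower bound $\tfrac{1}{3}|\chi(M)|\,d$ then has a strictly positive coefficient, so $g(N)$ is bounded both above and below by positive constant multiples of $d$.

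I do not expect a genuine obstacle here, as the corollary is essentially a formal consequence of the proposition combined with multiplicativity. The only point requiring mild care is the upper bound direction: I must use $\sigma(N) \le d\,\sigma(M)$ rather than an equality, and check that this suffices for the stated inequality $60\,\sigma(M)\,d \ge g(N)$. The strict positivity of $|\chi(M)|$ in the $\Theta(d)$ claim is what makes the asymptotic two-sided, so I would flag that the $\Theta$ statement genuinely requires the hypothesis $\chi(M) \neq 0$ and does not follow in the borderline case $\chi(M) = 0$, where only the $O(d)$ bound survives.
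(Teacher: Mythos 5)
Your proposal is correct and follows exactly the paper's (implicit) argument: the paper states this corollary as an immediate consequence of \autoref{pro:main bound}, with the single preparatory remark that the only manifold covered by $S^4$ is the non-orientable $\R P^4$ --- precisely the point you use to verify that the cover $N$ satisfies the hypotheses --- combined with $\chi(N) = d\,\chi(M)$ and $\sigma(N) \le d\,\sigma(M)$ via lifted triangulations. Your explicit care about the direction of the inequality $\sigma(N) \le d\,\sigma(M)$ and about the necessity of $\chi(M) \neq 0$ for the $\Theta(d)$ claim matches the intended reading of the statement.
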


Another immediate corollary is in terms of the characteristic functions of Milnor and Thurston~\cite{Milnor-characteristic-1977}. Denote 
\[\sigma_\infty(M) = \inf \Big\{\ \frac{\sigma(N)}{d} \ \mid N \to M \text{ a degree $d$ cover} \Big\}\]
and define the \emph{stable trisection genus} by
\[g_\infty(M) = \inf \Big\{\ \frac{g(N)}{d} \ \mid N \to M \text{ a degree $d$ cover} \Big\}.\]
For instance $g_\infty(S^1\times S^3) = 0$, $g_\infty(T^4) = 0,$ $g_\infty(S_g\times S^2) = 2g-2 = - \chi(S_g) = g_\infty(S_g\widetilde{\times} S^2)$ if $g\ge 1.$

The definition of stable trisection genus and \autoref{pro:main bound} imply:

\begin{corollary}
Let $M$ be a smooth orientable 4--manifold and  assume that $M$ is not diffeomorphic to $S^4.$ Then
\[60\ \sigma_\infty(M) \ge g_\infty(M) \ge \frac{1}{3} | \chi(M)|. \]
\end{corollary}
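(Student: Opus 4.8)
The plan is to prove the displayed chain of inequalities cover by cover and then pass to the infimum. Fix an arbitrary degree-$d$ cover $N \to M$. The single geometric input I need is multiplicativity of the Euler characteristic under finite covers, $\chi(N) = d\,\chi(M)$, which lets me convert a bound involving $\chi(N)$ into one involving $\chi(M)$.

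Before invoking \autoref{pro:main bound} I must check that $N$ itself is not diffeomorphic to $S^4$, since that is the hypothesis of the proposition. If $N$ were $S^4$, then $M$ would be covered by $S^4$; by the result cited from \cite[Section 12.1]{hillman-four-2002} the only manifold covered by $S^4$ is the non-orientable $\R P^4$, which is impossible because $M$ is orientable and, by assumption, not $S^4$. Hence $N \neq S^4$ and \autoref{pro:main bound} applies to $N$, giving
\[ 60\,\sigma(N) \ \ge\ g(N) \ \ge\ \tfrac{1}{3}\,|\chi(N)| \ =\ \tfrac{d}{3}\,|\chi(M)|. \]
Dividing through by $d$ yields $60\,\sigma(N)/d \ge g(N)/d \ge \tfrac{1}{3}|\chi(M)|$ for every cover $N \to M$.

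It remains to take infima over all finite covers. The right-hand inequality holds termwise with a constant lower bound, so $g_\infty(M) = \inf_N g(N)/d \ge \tfrac{1}{3}|\chi(M)|$. For the left-hand inequality, from $60\,\sigma(N)/d \ge g(N)/d \ge g_\infty(M)$ for every $N$ I obtain $60\,\sigma_\infty(M) = \inf_N 60\,\sigma(N)/d \ge g_\infty(M)$, using that scaling by the positive constant $60$ commutes with the infimum. I expect no genuine obstacle here: the whole argument is a termwise application of \autoref{pro:main bound} followed by elementary manipulation of infima, and the only point requiring care is checking that the $S^4$ exclusion is inherited by every cover $N$, which is exactly what orientability of $M$ guarantees.
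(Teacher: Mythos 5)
Your proposal is correct and follows essentially the same route as the paper: the paper derives this corollary directly from the definition of $g_\infty$ and \autoref{pro:main bound}, applied to each finite cover, with the exclusion of $S^4$ for covers justified (as you do) by the cited fact that the only manifold nontrivially covered by $S^4$ is the non-orientable $\R P^4$. Your write-up simply makes explicit the termwise application, the multiplicativity $\chi(N) = d\,\chi(M)$, and the passage to infima that the paper leaves implicit.
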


Betti numbers grow at most linearly in the degree, since the homology is carried by a lifted triangulation. Moreover, the equality $\chi(M) = 2 - 2 \beta_1(M) + \beta_2(M)$ implies that $\beta_2(M)$ must grow linearly if $\chi(M) > 0,$ and $\beta_1(M)$ must grow linearly if $\chi(M) < 0.$
The question of whether the growth is linear or sublinear for the other Betti number is governed by the $L^2$--Betti number $b_k^{(2)}(\widetilde{M}; \pi_1(M))$ due to work of L\"uck~\cite{Luck}.
Sublinear growth of $\beta_1$ and linear growth of $\beta_2$ occurs in covers of hyperbolic 4--manifolds, due to positive Euler characteristic and the work of L\"uck~\cite{Luck}.
Linear growth of both $\beta_1$ and $\beta_2$ occurs in cyclic covers of $(S_g \times S^2)\#(S^1\times S^3),$ where one unwraps the $S^1$--factor.
Linear growth of $\beta_1$ and sublinear growth of $\beta_2$ occurs in covers of $S_g \times S^2,$ where $\beta_2$ has constant value $2.$


\subsection*{Geometric consequences}

In dimension four, Euler characteristic is related to signature (in case the manifold is Einstein), volume or injectivity radius (in case the manifold if hyperbolic). This gives simple applications of the previous results. For instance, the Gromov-Hitchin-Thorpe inequality~\cite{Gromov-volume-1982, Kotschik-gromov-1998} implies 

\begin{corollary}
Let $M$ be a closed Einstein 4--manifold not diffeomorphic to $S^4.$ Then 
$$g(M) \ge \frac{1}{2}|\sign(M)| + \frac{1}{7776\pi^2}||M||,$$
where $\sign(M)$ is the signature of the intersection form and $||M||$ is the Gromov norm.
\end{corollary}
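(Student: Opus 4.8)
The plan is to combine the lower bound $g(M) \ge \frac{1}{3}|\chi(M)|$ supplied by \autoref{pro:main bound} with the Gromov-Hitchin-Thorpe inequality for Einstein 4--manifolds. The bound from \autoref{pro:main bound} applies here precisely because $M$ is assumed not diffeomorphic to $S^4$. Apart from this, the only genuine input is the Gromov-Hitchin-Thorpe inequality itself; everything else is bookkeeping and the simplification of constants.

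First I would dispose of the absolute value on $\chi(M)$. For a closed oriented Einstein 4--manifold the traceless Ricci curvature vanishes, so the Chern-Gauss-Bonnet formula exhibits $\chi(M)$ as the integral of a pointwise non-negative curvature expression (a sum of $|W|^2$ and a positive multiple of $s^2$); in particular $\chi(M) \ge 0$, so $|\chi(M)| = \chi(M)$ and \autoref{pro:main bound} reads $g(M) \ge \frac{1}{3}\chi(M)$. This nonnegativity also follows a posteriori from the inequality below, which sharpens the classical Hitchin-Thorpe inequality $\chi(M) \ge \frac{3}{2}|\sign(M)|$.

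Next I would invoke the Gromov-Hitchin-Thorpe inequality in the form due to Gromov~\cite{Gromov-volume-1982} and Kotschick~\cite{Kotschik-gromov-1998}: for a closed oriented Einstein 4--manifold,
\[ \chi(M) \ge \frac{3}{2}|\sign(M)| + \frac{1}{2592\pi^2}\|M\|. \]
Substituting this into $g(M) \ge \frac{1}{3}\chi(M)$ and simplifying the constants, using $\frac{1}{3}\cdot\frac{3}{2} = \frac{1}{2}$ and $3 \cdot 2592 = 7776$, yields precisely $g(M) \ge \frac{1}{2}|\sign(M)| + \frac{1}{7776\pi^2}\|M\|$.

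Because the whole argument reduces to a single substitution once the Gromov-Hitchin-Thorpe inequality is in hand, there is no substantial obstacle. The one point demanding care is to cite that inequality with the normalisation of the Gromov norm for which the constant $\frac{1}{2592\pi^2}$ is correct, since a different convention for $\|M\|$ would rescale the final constant. It is also worth recording the degenerate flat case $\chi(M) = 0$: there the right-hand side vanishes as well, as flat 4--manifolds satisfy $\sign(M) = 0$ and $\|M\| = 0$, so the inequality holds trivially.
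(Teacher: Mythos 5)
Your proposal is correct and is exactly the argument the paper intends: the corollary is stated as an immediate consequence of \autoref{pro:main bound} combined with the Gromov--Hitchin--Thorpe inequality $\chi(M) \ge \frac{3}{2}|\sign(M)| + \frac{1}{2592\pi^2}\|M\|$, and your substitution with the constants $\frac{1}{3}\cdot\frac{3}{2}=\frac{1}{2}$ and $3\cdot 2592 = 7776$ is precisely that deduction. Your additional observations (nonnegativity of $\chi(M)$ via Chern--Gauss--Bonnet, so the absolute value is harmless, and care with the normalisation of the Gromov norm) are sensible bookkeeping that the paper leaves implicit.
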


For congruence covers of arithmetic hyperbolic 3--manifolds, Lackenby~\cite[Corollary 1.6]{Lackenby-heegaard-2006} bounds Heegaard genus in terms of volume. His lower bound is established in terms of the Cheeger constant, which is uniformly bounded below for congruence covers. The following counterpart for trisection genus is both more general and has an elementary proof. The following two observations give some answer to Problem 1.24 in \cite{AIM}.

\begin{corollary}
Let $M$ be a closed hyperbolic 4--manifold. Then there is a positive constant $C=C(M)$ such that for any finite cover $M_i\to M,$
$$C\ \Vol(M_i) \ge g(M_i) \ge \frac{3}{8 \pi^2} \Vol(M_i).$$
\end{corollary}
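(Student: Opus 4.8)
The plan is to obtain both bounds from the Euler-characteristic inequalities already proved, together with the Gauss--Bonnet theorem, which in dimension four expresses the Euler characteristic of a closed hyperbolic $4$--manifold as a positive multiple of its volume. Specifically, for a closed hyperbolic $4$--manifold the Gauss--Bonnet formula reads $\chi(M) = \frac{3}{4\pi^2} \Vol(M)$, and the same identity holds for every finite cover $M_i$. In particular $\chi(M_i) > 0$ for all $i$, so \autoref{pro:main bound} applies directly to each $M_i$.

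For the lower bound, I would simply substitute. By \autoref{pro:main bound} we have $g(M_i) \ge \frac{1}{3}|\chi(M_i)| = \frac{1}{3}\chi(M_i)$, and inserting $\chi(M_i) = \frac{3}{4\pi^2}\Vol(M_i)$ gives $g(M_i) \ge \frac{3}{8\pi^2}\Vol(M_i)$ after simplification. Note this constant is universal, independent of $M$, which is the elementary improvement over Lackenby's Cheeger-constant argument advertised in the text.

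For the upper bound, I would use the triangulation side of \autoref{pro:main bound}, namely $60\,\sigma(M_i) \ge g(M_i)$. The key point is that the minimal number of $4$--simplices grows at most linearly in the degree: lifting a fixed triangulation of $M$ with $\sigma(M)$ simplices to a degree-$d$ cover $M_i$ yields a triangulation of $M_i$ with $d\,\sigma(M)$ simplices, so $\sigma(M_i) \le d\,\sigma(M)$. Since $d = \Vol(M_i)/\Vol(M)$ by multiplicativity of volume under covers, we get $\sigma(M_i) \le \frac{\sigma(M)}{\Vol(M)}\Vol(M_i)$, and hence $g(M_i) \le 60\,\sigma(M_i) \le \frac{60\,\sigma(M)}{\Vol(M)}\Vol(M_i)$. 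Setting $C = C(M) = \frac{60\,\sigma(M)}{\Vol(M)}$ gives the stated upper bound, with $C$ depending on $M$ through its fixed triangulation.

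The argument is essentially a bookkeeping exercise once Gauss--Bonnet and the multiplicativity of volume and simplex count under covers are in hand, so I do not anticipate a serious obstacle. The only point requiring mild care is the constant in Gauss--Bonnet: one must use the correct normalization for curvature $-1$ hyperbolic space so that $\chi = \frac{3}{4\pi^2}\Vol$, which is what makes the final constant $\frac{3}{8\pi^2}$ come out exactly. I would double-check this normalization against a standard reference before committing to the numerical factor.
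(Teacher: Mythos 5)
Your upper bound argument is correct and coincides with the paper's: it is precisely the content of \autoref{cor:cover bounds}, with $C(M) = 60\,\sigma(M)/\Vol(M)$ obtained by lifting a minimal (possibly singular) triangulation of $M$ to the degree-$d$ cover and using multiplicativity of volume.

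The lower bound, however, contains a genuine arithmetic gap. Your Gauss--Bonnet normalization is right ($\chi(M_i) = \frac{3}{4\pi^2}\Vol(M_i)$, equivalently the paper's $\Vol(M_i) = \frac{4}{3}\pi^2\chi(M_i)$), but substituting it into the bound $g(M_i) \ge \frac{1}{3}\chi(M_i)$ from \autoref{pro:main bound} yields only $g(M_i) \ge \frac{1}{3}\cdot\frac{3}{4\pi^2}\Vol(M_i) = \frac{1}{4\pi^2}\Vol(M_i) = \frac{2}{8\pi^2}\Vol(M_i)$, which is strictly weaker than the claimed $\frac{3}{8\pi^2}\Vol(M_i)$; the ``simplification'' you invoke amounts to $\frac{1}{3}\cdot\frac{3}{4\pi^2} = \frac{3}{8\pi^2}$, which is false. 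To obtain the stated constant one needs the sharper inequality $g(M_i) \ge \frac{1}{2}\chi(M_i)$, and this is exactly what the paper establishes: from the trisection equation, $g(M_i) \ge \chi(M_i) - 2$; for a closed hyperbolic 4--manifold $\chi(M_i)$ is positive and even, and moreover $g(M_i) \ge 3$, since by the Meier--Zupan classification~\cite{MZ-standard} the six manifolds of trisection genus at most two are all standard and none is hyperbolic. Hence if $\chi(M_i) \ge 4$ then $g(M_i) \ge \chi(M_i) - 2 \ge \frac{1}{2}\chi(M_i)$, while if $\chi(M_i) = 2$ then $g(M_i) \ge 3 > 1 = \frac{1}{2}\chi(M_i)$. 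With $g(M_i) \ge \frac{1}{2}\chi(M_i)$ in hand, Gauss--Bonnet gives $\frac{1}{2}\cdot\frac{3}{4\pi^2} = \frac{3}{8\pi^2}$ as required. So the missing idea in your proposal is the improvement of the generic bound $g \ge \frac{1}{3}|\chi|$ to $g \ge \frac{1}{2}\chi$ in the hyperbolic setting, which uses the evenness of the Euler characteristic together with the genus-two classification.
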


\begin{proof}
As noted in the proof of \autoref{pro:main bound}, $g(M_i)\ge \chi(M_i)-2$. Since $\chi(M_i)$ is positive and even and $g(M_i)\geq 3$, in fact $g(M_i)\ge \frac{1}{2}\chi(M_i)$. This together with $\Vol(M_i)=\frac{4}{3}\pi^2\chi(M_i)$ gives the lower bound.
The upper bound follows directly from \autoref{cor:cover bounds} by taking $C(M)=60\frac{\sigma(M)}{\Vol(M)}.$
\end{proof}

Since volume bounds injectivity radius, we have:

\begin{corollary}
Let $M$ be a closed hyperbolic 4--manifold and let $\mathrm{inj}(M)$ denote its injectivity radius. Then there is a universal positive constant $C'$ such that 
\[ g(M)\ge C'\cdot \mathrm{inj}(M).\]
\end{corollary}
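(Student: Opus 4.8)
The plan is to chain two geometric inequalities: the lower bound on trisection genus in terms of volume (established in the previous corollary) and a lower bound on volume in terms of injectivity radius for hyperbolic 4--manifolds. From the preceding corollary I already have $g(M) \ge \frac{3}{8\pi^2} \Vol(M)$, so it suffices to produce a constant bounding $\Vol(M)$ below by $\mathrm{inj}(M)$.

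First I would recall the standard ball-packing estimate: in a hyperbolic manifold of dimension $n$, a metric ball of radius equal to the injectivity radius embeds isometrically, so its volume is a lower bound for $\Vol(M)$. Thus $\Vol(M) \ge V_4(\mathrm{inj}(M))$, where $V_4(r)$ denotes the volume of a ball of radius $r$ in hyperbolic $4$--space $\mathbb{H}^4$. The volume of such a ball is an explicit increasing function of $r$, so this immediately gives $\Vol(M)\ge V_4(\mathrm{inj}(M))$.

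Next I would reduce the transcendental function $V_4(r)$ to a clean linear lower bound. The cleanest route is to observe that $V_4(r)$ grows at least linearly for small $r$ but in fact superlinearly (indeed exponentially) for large $r$; either way, for all $r>0$ one has $V_4(r)\ge c\, r$ for a suitable universal constant $c>0$, for instance by a first-order Taylor estimate near $0$ together with monotonicity, or simply by bounding the hyperbolic ball below by a Euclidean one of the same radius. Combining $\Vol(M)\ge c\,\mathrm{inj}(M)$ with $g(M)\ge \frac{3}{8\pi^2}\Vol(M)$ then yields $g(M)\ge \frac{3c}{8\pi^2}\,\mathrm{inj}(M)$, so one sets $C' = \frac{3c}{8\pi^2}$, which is universal (independent of $M$).

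The only subtlety I anticipate is making sure the constant is genuinely \emph{universal} rather than depending on $M$: the ball-packing volume bound and the function $V_4$ depend only on the dimension, not on the particular manifold, so this is automatic. One minor point worth checking is that the embedded ball has radius exactly $\mathrm{inj}(M)$ (using the standard convention that $\mathrm{inj}(M)$ is half the length of the shortest closed geodesic, or the supremum of radii of embedded balls); with the usual conventions the embedded-ball argument goes through verbatim. No serious obstacle is expected here, as the statement explicitly only claims existence of some universal $C'$ and does not ask for an optimal value.
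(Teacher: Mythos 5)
Your overall route --- chaining the preceding corollary's universal lower bound $g(M)\ge \frac{3}{8\pi^2}\Vol(M)$ with an embedded-ball estimate relating volume and injectivity radius --- is exactly the paper's intended proof (the paper's entire justification is the one-line remark that volume bounds injectivity radius). However, your execution has a genuine gap: the claim that $V_4(r)\ge c\,r$ for all $r>0$ with a universal $c>0$ is false. The volume of a ball of radius $r$ in $\mathbb{H}^4$ is $V_4(r)=2\pi^2\int_0^r\sinh^3 t\,dt$, which is asymptotic to $\frac{\pi^2}{2}r^4$ as $r\to 0$; hence $V_4(r)/r\to 0$ and no universal linear lower bound exists. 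Both of your suggested justifications fail for precisely this reason: the first-order Taylor polynomial of $V_4$ at $0$ is identically zero (the expansion begins at fourth order), and the Euclidean comparison gives exactly the quartic $\frac{\pi^2}{2}r^4$, which lies \emph{below} any linear function $c\,r$ once $r<(2c/\pi^2)^{1/3}$. The problematic regime is not vacuous: closed hyperbolic $4$--manifolds with arbitrarily short systole, hence arbitrarily small injectivity radius, are known to exist (Belolipetsky and Thomson), so a correct argument must genuinely handle small $\mathrm{inj}(M)$.

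The repair is short but needs an input you did not invoke, namely a bound that does not degenerate as $\mathrm{inj}(M)\to 0$. Either use Gauss--Bonnet, $\Vol(M)=\frac{4}{3}\pi^2\chi(M)\ge \frac{8}{3}\pi^2$ (since $\chi(M)$ is a positive even integer, as noted in the proof of the preceding corollary), or use directly that $g(M)\ge 3$ for hyperbolic $M$ (by the Meier--Zupan classification of genus $\le 2$ trisections, again as in the preceding corollary). Then for $\mathrm{inj}(M)\le 1$ one gets $g(M)\ge 3\ge 3\,\mathrm{inj}(M)$, while for $\mathrm{inj}(M)\ge 1$ your embedded-ball argument does work: $V_4(r)\ge\frac{\pi^2}{2}r^4\ge \frac{\pi^2}{2}\,r$ for $r\ge 1$, giving $g(M)\ge \frac{3}{8\pi^2}\cdot\frac{\pi^2}{2}\,\mathrm{inj}(M)=\frac{3}{16}\,\mathrm{inj}(M)$. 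Taking $C'=\frac{3}{16}$ then yields the statement with a constant that is universal, as required.
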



\subsection*{The Davis manifold}

We conclude this paper by describing trisections of the Davis manifold~\cite{Davis-hyperbolic-1985}. The Davis manifold $M_D$ is obtained by identifying opposite pairs of faces of the 120-cell with dihedral angles $\frac{2\pi}{5}$ in hyperbolic space. The boundary of the 120-cell has a cell decomposition with 120 dodecahedra, 720 pentagons, 1200 edges and 600 vertices. The face identifications leave us in $M_D$ with 60 dodecahedra, 144 pentagons, 60 edges and 1 vertex. A natural triangulation of $M_D$ is obtained as follows. Place a vertex $v_4$ at the centre of the 120-cell, a vertex $v_3$ at the centre of a dodecahedral face, a vertex $v_2$ at the barycentre of a pentagonal face thereof, a vertex $v_1$ at the barycentre of an edge of this, and a vertex $v_0$ at a vertex of this edge. This gives a Coxeter 4--simplex, usually denoted $\Delta_3,$ and the 120-cell is tiled by $(120)^2=14,400$ 4--simplies that are copies of $\Delta_3.$ In particular, there is a group $\Gamma$ of order $(120)^2$ acting on $M_D$ with $M_D/\Gamma$ equal to the simplex orbifold with underlying space $\Delta_3$. Ratcliffe and Tschantz~\cite{Ratcliffe-davis-2001} computed $\beta_1(M_D) = 24$ and $\beta_2(M_D)=72.$ Whence 
\[ 864,000 = 60 \cdot (120)^2 \ge g(M_D) \ge 96,\]
using the apriori bounds. We now describe how in situations such as the Coxeter construction of the Davis manifold, this upper bounds can be greatly improved using the 
techniques of \cite{Bell-computing-2017, Rubinstein-multisections-2016}. 

The idea is to partition the vertices of the triangulation into three sets, such that each 4--simplex meets one of them in one vertex and each of the other two sets in two vertices.  This partition is then used to define a piece-wise linear map to the 4--simplex. It is shown in \cite{Bell-computing-2017, Rubinstein-multisections-2016} that applying certain 2--4 bistellar moves to such a tricoloured triangulation gives a new triangulation with the property that the pull-back of the natural cubulation of the 2--simplex defines a trisection of the 4--manifold. We describe this for the case of the Davis manifold, and refer the reader to the treatment in \cite{Bell-computing-2017} for more details.

Define a partition of the vertices of the 4--simplices by $S_0 = \Gamma \cdot v_0 \cup \Gamma \cdot v_1,$  $S_1 = \Gamma\cdot v_2$ and $S_2 = \Gamma \cdot v_3 \cup \Gamma \cdot v_4.$ Then the graphs $\Gamma_k$ spanned by $S_k$ in the 1--skeleton of the triangulation of $M_D$ have the following properties. The graph $\Gamma_1$ is the quotient of the 1--skeleton of the 120-cell and hence a bouquet of 60 circles. The graph $\Gamma_2$ is the dual 1--skeleton of the cellulation of $M_D$ arising from the 120-cell and hence also a bouquet of 60 circles. The graph $\Gamma_2$ consists of 144 isolated vertices. Each 4--simplex has two vertices in each $S_0$ and $S_2,$ and one in $S_1.$ Hence it meets a unique 4--simplex in the tetrahedral face with all vertices in $S_0 \cup S_1.$ This gives a decomposition of $M_D$ in double-4--simplices. As described in \cite[Construction 3]{Bell-computing-2017}, we now apply 2--4 bistellar moves to each of these double 4--simplices. This increases the number of pentachora to 28,800. The graphs $\Gamma_0$ and $\Gamma_1$ are not changed, and $\Gamma_2$ turns into a connected graph with 144 vertices and 7,200 edges. Hence $\Gamma_2$ is homotopic to a bouquet of 7,057 circles.

We now compute the Euler characteristic of the central surface $\Sigma$. We obtain one square for each pentachoron, hence there are 28,800 squares. The number of vertices of the surface equals the number of triangles in the triangulation that have vertices in all partition sets. It is not difficult to check that there are 14,400 such triangles. From this information, we compute $g(\Sigma) = 7,201.$ Whence
\[ 7,201 \ge g(M_D) \ge 96.\]
The above approach applies to any Coxeter type situation. In case of the Davis manifold, improvements can be made by choosing smaller triangulations of the 120-cell that still have the desired partition properties. Our current best upper bound, however, remains at 5621, and does not improve the current gap in magnitudes. It would be interesting to see whether greater improvements can be obtained for the known hyperbolic 4--manifolds arising from Coxeter constructions.


\textbf{Concluding remarks.}
The main challenge in obtaining lower bounds on trisection genus lies in dermining lower bounds on the genera of the 4--dimensional 1--handlebodies. Mostow rigidity, of course, implies that every algebraic or topological invariant of a hyperbolic 4--manifold is a geometric invariant, but there is a philosophical distinction. It is in this spirit that we formulate the following:

\begin{task}
Determine stronger lower bounds on trisection genus of hyperbolic 4--manifolds using the \emph{geometry} of the manifold.
\end{task}


\subsection*{Acknowledgements}

The authors thank David Gay and Jonathan Hillman for helpful comments.
This work is supported by ARC Future Fellowship FT170100316. 



\address{Michelle Chu\\Department of Mathematics, University of California, Santa Barbara, CA 93106, USA\\{mchu@math.ucsb.edu}\\----}

\address{Stephan Tillmann\\School of Mathematics and Statistics F07, The University of Sydney, NSW 2006 Australia\\{stephan.tillmann@sydney.edu.au}}

\Addresses

\end{document}